\documentclass[12pt]{article}
% style
%\pagestyle{empty}
%\textheight = 9.3in \textwidth = 6.0in \headsep = 0.0in
%\headheight = 0.0in \topmargin = 0.3in \oddsidemargin=0.1in
%\evensidemargin=0.1in
%\def\Dj{\hbox{D\kern-.73em\raise.30ex\hbox{-}
%\raise-.30ex\hbox{}}}
%\def\dj{\hbox{d\kern-.33em\raise.80ex\hbox{-}
%\raise-.80ex\hbox{\kern-.40em}}}
\usepackage{epsfig}
\usepackage{amsmath,amsthm,amsfonts,amssymb,amscd,cite}
\usepackage{graphicx}
\usepackage[active]{srcltx}
\usepackage[colorlinks=true, linkcolor=red, filecolor=brown, citecolor=blue, urlcolor=blue]{hyperref}
\usepackage{float}
%amsmath,  amssymb, amscd, float, , color, array

\newtheorem{proposition}{Proposition}

\allowdisplaybreaks

\begin{document}

\baselineskip=0.30in

\vspace*{30mm}

\begin{center}
{\Large \bf Splice Graphs and Their Topological Indices }

\vspace{10mm}

{\large \bf Reza Sharafdini}$^1$  \ , \, {\large \bf Ivan Gutman}$^2$

\vspace{9mm}

\baselineskip=0.20in

$^1$ {\it Department of Mathematics, Persian Gulf University, \\
Bushehr 7516913817, Iran\/}, \\
e-mail: {\tt sharafdini@pgu.ac.ir} \\[3mm]

$^2$ {\it Faculty of Science, University of Kragujevac, P. O. Box 60, \\ 34000 Kragujevac, Serbia\/}, \\
e-mail: {\tt gutman@kg.ac.rs}

\vspace{6mm}

\end{center}

\vspace{6mm}

\baselineskip=0.20in

\noindent {\bf Abstract }

\vspace{3mm}

{\small Let $G_1=(V_1,E_1)$ and $G_2=(V_2,E_2)$ be two graphs
with disjoint vertex sets $V_1$ and $V_2$. Let $u_1 \in V_1$
and $u_2 \in V_2$.  A splice of $G_1$ and $G_2$ by vertices $u_1$ and $u_2$,  $\mathcal{S}(G_1,G_2;u_1,u_2)$, is defined by identifying the vertices $u_1$ and $u_2$ in the union of $G_1$ and $G_2$. In this paper we calculate the
Szeged, edge-Szeged, $PI$, vertex-$PI$ and eccentric connectivity
indices of splice graphs.}

\vspace{10mm}

\baselineskip=0.30in

\noindent {\large \bf 1. Introduction}

\vspace{3mm}

Let $G=(V,E)$ be a simple graph with $V$ and $E$ being its vertex
and edge sets, respectively. Graph theory has successfully provided
chemists with a variety of useful tools \cite{1,2}, among which are
the topological indices or molecular--graph--based structure
descriptors \cite{3,4}. In this paper we are concerned with five of
these topological indices, that recently attracted much attention and
found noteworthy chemical applications.

All graphs considered in this paper are assumed to be simple, undirected,
without weighted edges, and connected.

The {\bf Szeged index} of a graph $G$ is denoted by $Sz(G)$ and defined
as \cite{5}
\begin{equation}                    \label{1}
Sz = Sz(G) = \sum_{e=uv} n_u(e)\,n_v(e) \ .
\end{equation}
Here the sum is taken over all edges of $G$, and for a given edge $e=uv$,
the quantity $n_u(e)$ denotes the number of vertices closer to $u$ than to
$v$, and the quantity $n_v(e)$ is defined analogously. For more details on
the Szeged index see the review \cite{6} and the references cited therein.

Denote by $d_G(x,y)$ the distance (= number of edges in a shortest path)
between the vertices $x$ and $y$ of the graph $G$. Then we can define
the sets
$$
N(e,u,G) = \Big\{x \in V(G)\,|\,d_G(x,u)<d_G(x,v)\Big\}
$$
and
$$
N(e,v,G) = \Big\{x \in V(G)\,|\,d_G(x,v)<d_G(x,u)\Big\}
$$
by means of which we have:
$$
n_u(e) = n_u(e,G) = \Big| N(e,u,G)\Big| \hspace{10mm} \mbox{and} \hspace{10mm}
n_v(e) = n_v(e,G) = \Big| N(e,v,G)\Big| \ .
$$

It is obvious that an end-vertex of any edge is closer to itself
than to the other end-vertex of that edge. Therefore the product
$n_u(e)\,n_v(e)$ is always positive.

The {\bf edge-Szeged index} is obtained by replacing
$n_u(e)\,n_v(e)$ in Eq. (\ref{1}) by $m_u(e)\,m_v(e)$, where
$m_u(e)$ is the number of edges in $G$ whose distance to vertex
$u$ is smaller than the distance to vertex $v$, and $m_v(e)$ is
defined analogously. Hence the edge version of the Szeged index is
given by \cite{7}
$$
Sz_e = Sz_e(G) = \sum_{e=uv} m_u(e)\,m_v(e) \ .
$$
We recall that the distance between the edge $f=xy$ and the vertex
$u$ in the graph $G$, denoted by $d_G(f,u)$, is define as
$d_G(f,u) = \min \Big\{ d_G(x,u),d_G(y,u)\Big\}$ . We can now
introduce the sets
$$
M(e,u,G) = \Big\{f \in E(G)\,|\,d_G(f,u)<d_G(f,v)\Big\}
$$
and
$$
M(e,v,G) = \Big\{f \in E(G)\,|\,d_G(f,v)<d_G(f,u)\Big\}
$$
by means of which we have:
$$
m_u(e) = m_u(e,G) = \Big| M(e,u,G)\Big| \hspace{10mm} \mbox{and} \hspace{10mm}
m_v(e) = m_v(e,G) = \Big| M(e,v,G)\Big| \ .
$$

If instead of multiplicative contributions $n_u(e)\,n_v(e)$
and $m_u(e)\,m_v(e)$, we consider their additive versions,
$n_u(e)+n_v(e)$ and $m_u(e)+m_v(e)$, then we obtain the
vertex-- and the edge-$PI$ indices, respectively.\footnote{The
acronym $PI$ comes from Padmakar and Ivan. Whereas Padmakar
is the first name of P. V. Khadikar, the inventor of the
$PI$ index \cite{8}, Ivan comes from the first name of one
of the present authors, whose contribution to the discovery
of the $PI$ index was nil.}

The {\bf edge-$\mathbf{PI}$ index} is defined as \cite{8}
$$
PI_e = PI_e(G) = \sum_{e=uv} \Big[ m_u(e) + m_v(e) \Big] \ .
$$
Since this edge version was introduced first, the subscript $e$ is
usually omitted and the index is referred to simply as $PI$ index. More
details on the $PI$ index are found in the review \cite{9} and the
references cited therein.

The {\bf vertex-$\mathbf{PI}$ index} seems to was first considered
by Khalifeh et al. \cite{10} and is defined as
$$
PI_v = PI_v(G) = \sum_{e=uv} \Big[ n_u(e) + n_v(e) \Big] \ .
$$

The {\bf eccentric connectivity index} of the graph $G$ is defined
as \cite{11}
\begin{equation}                    \label{ec}
Ecc = Ecc(G) = \sum_{u \in V} \deg(u)\,\varepsilon(u)
\end{equation}
where for a given vertex $u$, its eccentricity $\varepsilon(u)$
is the greatest distance between $u$ and any other vertex of $G$.
The maximum eccentricity over all vertices of $G$ is called the
{\it diameter\/} of $G$ whereas the minimum eccentricity among the
vertices of $G$ is the {\it radius\/} of $G$. The set of vertices
whose eccentricity is equal to the radius of $G$ is called the
{\it center\/} of $G$. It is well known that each tree has either
one or two vertices in its center.

The aim of this article is to contribute to the theory of the
above described five topological indices by showing how these can
be computed in the case of splice graphs.

Suppose that $G_1=(V_1,E_1)$ and $G_2=(V_2,E_2)$ are two graphs
with disjoint vertex sets. Let $u_1 \in V_1$ and $u_2 \in V_2$ be
given vertices of $G_1$ and $G_2$\,, respectively.
Following $\rm Do\check{s}li\acute{c}$ \cite{Doslic-splice}, a splice of $G_1$ and $G_2$ by vertices $u_1$ and $u_2$,  $\mathcal{S}(G_1,G_2;u_1,u_2)$, is defined by identifying the vertices $u_1$ and $u_2$ in the union of $G_1$ and $G_2$ (See Figure 1).

\begin{figure}[H]
\centering
\includegraphics[height=3cm]{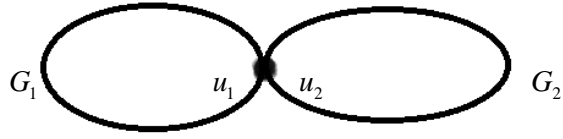}
\caption{The splice graph $\mathcal{S}(G_1,G_2;u_1,u_2)$ and
the labeling of its structural details.}
\end{figure}

\baselineskip=0.20in

Some topological indices of splice graphs are computed already \cite{Ashrafi-splice,balak-cutvetex,Mansour-Br-sp,Reza-Hosoya}. In this paper we aim to compute some other indices for these graphs. Note that the idea of the proofs is based on that of a previously communicated work \cite{bridge-Mogh-Gutman}.

\baselineskip=0.30in

\vspace{5mm}

\baselineskip=0.30in

\noindent {\large \bf 2. Main Results}

\vspace{3mm}

In this section, we compute the Szeged, edge-Szeged, edge-$PI$,
vertex-$PI$, and eccentric connectivity indices of the above
described splice graph. We first introduce the following
structural parameters of $\mathcal{S} = \mathcal{S}(G_1,G_2;u_1,u_2)$. Let $i=1,2$ and
$f=xy \in E_i$\,, and
\begin{eqnarray*}
n_x^i(f) = \Big|N(f,x,G_i) \Big| & \ , \ & n_y^i(f) =
\Big|N(f,y,G_i) \Big| \\
n_x(f) = \Big|N(f,x,\mathcal{S}) \Big| & \ , \ & n_y(f) =
\Big|N(f,y,\mathcal{S}) \Big| \\
m_x^i(f) = \Big|M(f,x,G_i) \Big| & \ , \ & m_y^i(f) =
\Big|M(f,y,G_i) \Big| \\
m_x(f) = \Big|M(f,x,\mathcal{S}) \Big| & \ , \ & m_y(f) = \Big|M(f,y,\mathcal{S})
\Big| \ .
\end{eqnarray*}
In addition, for a given vertex $u \in V(\mathcal{S})$, let
$\varepsilon_1(u)$ be the eccentricity of $u$ as a vertex of
$G_1$\,, $\varepsilon_2(u)$ the eccentricity of $u$ as a vertex of
$G_2$\,, and $\varepsilon(u)$ the eccentricity of $u$ as a vertex
of the splice graph $\mathcal{S}$\,.

\vspace{3mm}

\begin{proposition} \label{prop1} Assume that $f=xy \in E_1$\,. \\
({\it i}) If $u_1 \in N(f,x,G_1)$, then
\begin{eqnarray*}
n_x(f)=n_x^1(f) + |V_2| & \ , \ & n_y(f)=n_y^1(f) \\
m_x(f)=m_x^1(f) + |E_2|& \ , \ & m_y(f)=m_y^1(f) \ .
\end{eqnarray*}
({\it ii}) If $d_{G_1}(u_1,x) = d_{G_1}(u_1,y)$, then
\begin{eqnarray*}
n_x(f)=n_x^1(f) & \ , \ & n_y(f)=n_y^1(f) \\
m_x(f)=m_x^1(f) & \ , \ & m_y(f)=m_y^1(f) \ .
\end{eqnarray*}
Analogous relations hold if $f=xy \in E_2$\,.
\end{proposition}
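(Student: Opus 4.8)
The plan is to exploit the single structural feature that defines a splice: the identified vertex $u := u_1 = u_2$ is a cut vertex of $\mathcal{S}$ separating $V_1\setminus\{u\}$ from $V_2\setminus\{u\}$. First I would record the resulting distance decomposition. Since every walk between the two sides must pass through $u$, a shortest path never gains by detouring into the opposite factor; hence $d_{\mathcal{S}}(w,w')=d_{G_1}(w,w')$ for $w,w'\in V_1$, $d_{\mathcal{S}}(w,w')=d_{G_2}(w,w')$ for $w,w'\in V_2$, and $d_{\mathcal{S}}(w,z)=d_{G_1}(w,u)+d_{G_2}(u,z)$ for $w\in V_1$, $z\in V_2$. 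The same reasoning applied to the edge-to-vertex distance gives, for $g\in E_2$ and $x\in V_1$, the identity $d_{\mathcal{S}}(g,x)=d_{G_2}(g,u)+d_{G_1}(u,x)$. These decompositions are what make the whole computation elementary.

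Next I would classify every vertex and every edge of $\mathcal{S}$ according to the side of $f=xy$ on which it falls. For a vertex $w\in V_1$ the comparison of $d_{\mathcal{S}}(w,x)$ with $d_{\mathcal{S}}(w,y)$ takes place entirely inside $G_1$, so $w$ lies on the same side of $f$ in $\mathcal{S}$ as it does in $G_1$. For a vertex $z\in V_2\setminus\{u\}$, both $d_{\mathcal{S}}(z,x)$ and $d_{\mathcal{S}}(z,y)$ carry the common summand $d_{G_2}(z,u)$, so the comparison collapses to $d_{G_1}(u,x)$ versus $d_{G_1}(u,y)$; that is, every such $z$ is sorted exactly as the vertex $u_1$ is sorted in $G_1$. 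The edges split in the same way: an edge $g\in E_1$ is assigned as in $G_1$, while an edge $g\in E_2$ is governed by whether $u_1\in N(f,x,G_1)$.

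With this dichotomy in hand the two cases are immediate. In case ({\it i}), where $u_1\in N(f,x,G_1)$, the common comparison $d_{G_1}(u,x)<d_{G_1}(u,y)$ throws the entire $G_2$-part onto the $x$-side and leaves the $y$-side untouched; combining the $G_2$ contribution with the counts inherited from $G_1$ yields the expressions recorded in the Proposition, the edge count gaining the full $|E_2|$ because $E(\mathcal{S})=E_1\cup E_2$ is a disjoint union in which no edge is identified. In case ({\it ii}), where $d_{G_1}(u_1,x)=d_{G_1}(u_1,y)$, every vertex of $V_2\setminus\{u\}$ and every edge of $E_2$ becomes equidistant from $x$ and $y$ and hence contributes to neither side, so the $\mathcal{S}$-counts coincide with the $G_1$-counts. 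The statements for $f\in E_2$ follow by interchanging the roles of the two factors.

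The step that needs the most care is the vertex bookkeeping around $u$: because $u$ belongs to $V_1$ and is simultaneously the image of $u_2\in V_2$, it must be counted exactly once when the $G_1$-side and $G_2$-side contributions are merged. This is in contrast to the edge counts, where nothing is identified and $|E_2|$ enters cleanly. I would therefore organize the vertex count so that $u$ is attributed to a single factor, tracking the remaining $|V_2|-1$ vertices of $G_2$ separately and verifying that they are added with the correct sign in each case. Getting this multiplicity right is the only genuine obstacle; everything else is a direct consequence of the distance decomposition above.
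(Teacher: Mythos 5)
Your cut-vertex distance decomposition is the natural and surely the intended argument; there is in fact nothing in the paper to compare it against, since the paper's ``proof'' of this proposition is only the remark that it is easy and left to the reader. However, your write-up has a genuine gap, located exactly at the point you flag in your closing paragraph and then leave unresolved. Under the hypothesis $u_1\in N(f,x,G_1)$, the identified vertex $u$ is already counted in $n_x^1(f)$, and the vertices that the $G_2$-side contributes to the $x$-side are precisely those of $V_2\setminus\{u_2\}$, which number $|V_2|-1$. Hence your own bookkeeping gives $n_x(f)=n_x^1(f)+|V_2|-1$, \emph{not} $n_x^1(f)+|V_2|$ as you assert when you say the decomposition ``yields the expressions recorded in the Proposition''; attributing $u$ to the $G_2$-factor instead merely moves the $-1$ to the other summand ($n_x^1(f)-1$ plus $|V_2|$), so the ``verification of signs'' you postpone cannot succeed. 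Completed honestly, your argument disproves the vertex half of part ({\it i}) rather than proving it. Concretely, take $G_1=G_2=K_2$ with $V_1=\{a,u_1\}$ and $V_2=\{u_2,b\}$, so that $\mathcal{S}$ is the path $a\,u\,b$, and let $f=xy$ with $x=u_1$, $y=a$. Then $n_x^1(f)=1$ and $|V_2|=2$, but in $\mathcal{S}$ only $u$ and $b$ are closer to $x$ than to $y$, so $n_x(f)=2=n_x^1(f)+|V_2|-1\neq 3$. Your treatment of the edge counts and of part ({\it ii}) is correct: $E(\mathcal{S})=E_1\cup E_2$ is a genuinely disjoint union, so $m_x(f)=m_x^1(f)+|E_2|$ holds as stated, and in case ({\it ii}) every vertex and edge of the $G_2$-side is equidistant from $x$ and $y$.

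For perspective, $n_x(f)=n_x^1(f)+|V_2|$ \emph{is} the correct formula for bridge graphs, where $u_1\neq u_2$ remain distinct vertices joined by a new edge and nothing is identified; the paper says its proofs are adapted from the bridge-graph paper \cite{bridge-Mogh-Gutman}, and the vertex lost in the identification was evidently overlooked there. The discrepancy is not cosmetic: it propagates into Proposition \ref{prop2} and Proposition \ref{prop4}, whose terms $|V_2|\sum_{f\in E_1} n^1(f)$, $|V_1|\sum_{f\in E_2} n^2(f)$, and $(t_2+1)|V_1|+(t_1+1)|V_2|$ inherit the same error (the ``$+1$'' terms come from a bridge edge $e=u_1u_2$ that does not exist in a splice). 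The correct conclusion of your analysis is therefore that part ({\it i}) should read $n_x(f)=n_x^1(f)+|V_2|-1$, with the edge relations and part ({\it ii}) unchanged; your method is sound, but you needed to carry the bookkeeping through and report the contradiction with the printed statement.
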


\vspace{3mm}

\noindent {\it Proof\/} is easy and is left to the reader.

\vspace{3mm}

In order to verify the following propositions we need some
preparations. For $i=1,2$, let
$$
S_i = \Big\{ f=xy \in E_i\,|\,d_{G_i}(x,u_i) = d_{G_i}(y,u_i)
\Big\} \hspace{3mm} , \hspace{3mm} T_i = E_i \setminus S_i
\hspace{3mm} , \hspace{3mm} t_i = |T_i| \ .
$$
In addition, for $f=xy \in E_i$ define
$$
n^i(f) = \left\{
\begin{array}{lll}
n_x^i(f) & \ \mbox{ if } \ & d_{G_i}(x,u_i) > d_{G_i}(y,u_i) \\[3mm]
n_y^i(f) & \ \mbox{ if } \ & d_{G_i}(x,u_i) < d_{G_i}(y,u_i) \\[3mm]
0 & \ \mbox{ if } \ & d_{G_i}(x,u_i) = d_{G_i}(y,u_i)
\end{array}
\right.
$$
and
$$
m^i(f) = \left\{
\begin{array}{lll}
m_x^i(f) & \ \mbox{ if } \ & d_{G_i}(x,u_i) > d_{G_i}(y,u_i) \\[3mm]
m_y^i(f) & \ \mbox{ if } \ & d_{G_i}(x,u_i) < d_{G_i}(y,u_i) \\[3mm]
0 & \ \mbox{ if } \ & d_{G_i}(x,u_i) = d_{G_i}(y,u_i) \ .
\end{array}
\right.
$$

\vspace{3mm}

\begin{proposition} \label{prop2}
\begin{eqnarray*}
Sz(\mathcal{S}) & = & Sz(G_1)+Sz(G_2) \\[3mm]
& + & |V_2|\,\sum_{f=xy \in E_1} n^1(f) + |V_1|\,\sum_{f=xy \in
E_2} n^2(f)\ .
\end{eqnarray*}
\end{proposition}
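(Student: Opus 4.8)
The plan is to expand $Sz(\mathcal{S})$ directly from its definition and to exploit that the splice identifies only the single vertex $u_1 = u_2$, so that no edge of $G_1$ coincides with an edge of $G_2$ and the edge set of $\mathcal{S}$ is the disjoint union $E_1 \cup E_2$. Writing
$$
Sz(\mathcal{S}) = \sum_{f=xy \in E_1} n_x(f)\,n_y(f) + \sum_{f=xy \in E_2} n_x(f)\,n_y(f),
$$
I would treat the two sums separately, and by the evident symmetry between $G_1$ and $G_2$ concentrate entirely on the first one.

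The heart of the argument is to establish, for every edge $f = xy \in E_1$, the single unified identity
$$
n_x(f)\,n_y(f) = n_x^1(f)\,n_y^1(f) + |V_2|\,n^1(f),
$$
which is exactly where Proposition \ref{prop1} is applied. If $f \in S_1$, so that $d_{G_1}(x,u_1) = d_{G_1}(y,u_1)$, part ({\it ii}) gives $n_x(f) = n_x^1(f)$ and $n_y(f) = n_y^1(f)$, while $n^1(f) = 0$ by definition, so both sides coincide. If $f \in T_1$ with $u_1$ closer to $x$, i.e. $u_1 \in N(f,x,G_1)$, part ({\it i}) gives $n_x(f) = n_x^1(f) + |V_2|$ and $n_y(f) = n_y^1(f)$; expanding the product yields $n_x^1(f)\,n_y^1(f) + |V_2|\,n_y^1(f)$, and since here $d_{G_1}(x,u_1) < d_{G_1}(y,u_1)$, the definition of $n^1(f)$ selects precisely $n_y^1(f)$. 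The complementary subcase, $u_1$ closer to $y$, is identical after interchanging $x$ and $y$. Hence the additive correction $|V_2|\,n^1(f)$ always attaches to the count at the endpoint farther from $u_1$, which is exactly the endpoint that does \emph{not} receive the $|V_2|$ vertices coming from $G_2$, and the identity holds in every case.

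Summing this identity over $f \in E_1$ and recognizing that $\sum_{f \in E_1} n_x^1(f)\,n_y^1(f) = Sz(G_1)$ produces
$$
\sum_{f=xy \in E_1} n_x(f)\,n_y(f) = Sz(G_1) + |V_2|\sum_{f=xy \in E_1} n^1(f).
$$
Repeating the same reasoning verbatim with the indices $1$ and $2$ swapped handles the $E_2$ sum, giving $Sz(G_2) + |V_1|\sum_{f \in E_2} n^2(f)$, and adding the two yields the claimed formula. The only delicate point is the bookkeeping in the case split: one must verify that the orientation convention built into the definition of $n^i(f)$ (which endpoint is named $x$ and which $y$) lines up correctly with the side that absorbs the $|V_2|$ extra vertices, and that the value $0$ on $S_i$ is what suppresses the correction for equidistant edges. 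Once that matching is checked, the computation is routine.
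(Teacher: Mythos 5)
Your proof is correct and follows essentially the same route as the paper: decompose $Sz(\mathcal{S})$ over the disjoint edge sets $E_1$ and $E_2$, apply Proposition \ref{prop1} edgewise to get $n_x(f)\,n_y(f) = n_x^1(f)\,n_y^1(f) + |V_2|\,n^1(f)$, and sum. The only difference is that you spell out the case analysis behind this per-edge identity, which the paper's proof leaves implicit when passing to Eq.~(\ref{p2}).
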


\begin{proof}
\begin{eqnarray}
Sz(\mathcal{S}) & = & \sum_{f=xy \in E_1} n_x(f)\,n_y(f) + \sum_{f=xy \in E_2} n_x(f)\,n_y(f)\nonumber \\[3mm]
& = & \sum_{f=xy \in E_1} n_x^1(f)\,n_y^1(f) + \sum_{f=xy \in E_1} |V_2|\,n^1(f) \nonumber \\[3mm]
& + & \sum_{f=xy \in E_2} n_x^2(f)\,n_y^2(f) + \sum_{f=xy \in E_2} |V_1|\,n^2(f) \ .     \label{p2}
\end{eqnarray}
Because for $i=1,2$,
$$
\sum_{f=xy \in E_i} n_x^i(f)\,n_y^i(f) = Sz(G_i)
$$
from Eq. (\ref{p2}) we directly obtain Proposition \ref{prop2}. \end{proof}

\vspace{3mm}

\begin{proposition} \label{prop3}
\begin{eqnarray*}
Sz_e(\mathcal{S}) & = & Sz_e(G_1)+Sz_e(G_2) \\[3mm]
& + & |E_2|\,\sum_{f=xy \in E_1} m^1(f) +
|E_1|\,\sum_{f=xy \in E_2} m^2(f) \ .
\end{eqnarray*}
\end{proposition}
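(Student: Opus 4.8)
The plan is to mirror, term for term, the argument already used for Proposition~\ref{prop2}, replacing vertex counts by edge counts throughout. First I would split the defining sum according to which factor graph each edge belongs to, writing
\begin{equation*}
Sz_e(\mathcal{S}) = \sum_{f=xy \in E_1} m_x(f)\,m_y(f) + \sum_{f=xy \in E_2} m_x(f)\,m_y(f) \ ,
\end{equation*}
which is legitimate because identifying $u_1$ with $u_2$ creates no new edges, so that $E(\mathcal{S}) = E_1 \cup E_2$ as a disjoint union.

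Next I would evaluate the product $m_x(f)\,m_y(f)$ for a fixed edge $f=xy \in E_1$ using the edge part of Proposition~\ref{prop1}. There are three cases, governed by the position of $u_1$. If $u_1 \in N(f,x,G_1)$, then $m_x(f)=m_x^1(f)+|E_2|$ and $m_y(f)=m_y^1(f)$, so the product equals $m_x^1(f)\,m_y^1(f) + |E_2|\,m_y^1(f)$; by the analogous relation, if $u_1 \in N(f,y,G_1)$ the product equals $m_x^1(f)\,m_y^1(f) + |E_2|\,m_x^1(f)$; and if $d_{G_1}(x,u_1)=d_{G_1}(y,u_1)$ the product is simply $m_x^1(f)\,m_y^1(f)$. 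Comparing these three outcomes with the definition of $m^1(f)$, I would observe that in every case
\begin{equation*}
m_x(f)\,m_y(f) = m_x^1(f)\,m_y^1(f) + |E_2|\,m^1(f) \ ,
\end{equation*}
since $m^1(f)$ was defined precisely so as to select $m_y^1(f)$, $m_x^1(f)$, or $0$ in the three respective situations.

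Summing this identity over all $f \in E_1$, and using that $\sum_{f \in E_1} m_x^1(f)\,m_y^1(f) = Sz_e(G_1)$, gives the contribution $Sz_e(G_1) + |E_2|\sum_{f \in E_1} m^1(f)$. Running the symmetric computation over $E_2$ yields $Sz_e(G_2) + |E_1|\sum_{f \in E_2} m^2(f)$, and adding the two contributions produces the claimed formula.

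As for difficulty, I expect no genuine obstacle: the entire argument is the edge-theoretic shadow of Proposition~\ref{prop2}, and the only point requiring care is confirming that the piecewise definition of $m^1(f)$ matches the three cases of Proposition~\ref{prop1} with the correct index on $x$ versus $y$. The single structural fact being used is that every edge of $G_2$ reaches $x$ (respectively $y$) only through the identified vertex $u_1=u_2$, which is exactly what makes the $|E_2|$ term an all-or-nothing contribution; this is already encapsulated in Proposition~\ref{prop1} and so requires no fresh verification here.
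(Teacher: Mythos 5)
Your proposal is correct and takes essentially the same route as the paper: split the sum over $E_1$ and $E_2$, apply the edge part of Proposition~\ref{prop1} to get $m_x(f)\,m_y(f)=m_x^i(f)\,m_y^i(f)+|E_j|\,m^i(f)$, and recognize $Sz_e(G_i)$ in the leading sums. The only difference is that you spell out the three-case verification that the piecewise definition of $m^i(f)$ matches Proposition~\ref{prop1}, which the paper's proof leaves implicit.
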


\begin{proof}
\begin{eqnarray}
Sz_e(\mathcal{S}) & = & \sum_{f=xy \in E_1} m_x(f)\,m_y(f) + \sum_{f=xy \in E_2} m_x(f)\,m_y(f)\nonumber \\[3mm]
& = & \sum_{f=xy \in E_1} m_x^1(f)\,m_y^1(f) + \sum_{f=xy \in E_1} |E_2|\,m^1(f) \nonumber \\[3mm]
& + & \sum_{f=xy \in E_2} m_x^2(f)\,m_y^2(f) + \sum_{f=xy \in E_2} |E_1|\,m^2(f)\ .     \label{p3}
\end{eqnarray}
Because for $i=1,2$,
$$
\sum_{f=xy \in E_i} m_x^i(f)\,m_y^i(f) = Sz_e(G_i)
$$
from Eq. (\ref{p3}) we directly obtain Proposition \ref{prop3}.
\end{proof}

\begin{proposition} \label{prop4}
$$
PI_v(\mathcal{S}) = PI_v(G_1)+PI_v(G_2) + (t_2+1)|V_1| + (t_1+1)|V_2| \ .
$$
\end{proposition}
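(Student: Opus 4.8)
The plan is to follow exactly the template already used for Propositions \ref{prop2} and \ref{prop3}, but now with the additive combination $n_x(f)+n_y(f)$ in place of the product $n_x(f)\,n_y(f)$. First I would split the defining sum according to the partition $E(\mathcal{S}) = E_1 \cup E_2$, writing
\[
PI_v(\mathcal{S}) = \sum_{f=xy \in E_1} \big[ n_x(f)+n_y(f) \big] + \sum_{f=xy \in E_2} \big[ n_x(f)+n_y(f) \big],
\]
and then analyze each edge of $E_1$ by means of Proposition \ref{prop1}, the edges of $E_2$ being handled symmetrically.

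For a fixed $f=xy \in E_1$ there are two possibilities. If $f \in S_1$, i.e. $d_{G_1}(x,u_1)=d_{G_1}(y,u_1)$, then Proposition \ref{prop1}(ii) gives $n_x(f)+n_y(f)=n_x^1(f)+n_y^1(f)$, so such an edge contributes nothing beyond its $G_1$-value. If instead $f \in T_1$, then one endpoint is strictly nearer to $u_1$, and Proposition \ref{prop1}(i) applied to that endpoint shows that the whole block of vertices coming from $G_2$ falls on the near side; hence $n_x(f)+n_y(f)$ exceeds $n_x^1(f)+n_y^1(f)$ by a fixed amount proportional to $|V_2|$. Summing over $E_1$, the $E_1$-part becomes $\sum_{f \in E_1}[n_x^1(f)+n_y^1(f)]$ plus a multiple of $|V_2|$ whose coefficient is governed by $t_1=|T_1|$.

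The next step is to identify $\sum_{f=xy \in E_1}[n_x^1(f)+n_y^1(f)]$ with $PI_v(G_1)$ directly from the definition of the vertex-$PI$ index, and likewise $\sum_{f=xy \in E_2}[n_x^2(f)+n_y^2(f)]=PI_v(G_2)$. Assembling the $E_1$- and $E_2$-contributions then produces the claimed shape $PI_v(G_1)+PI_v(G_2)$ together with two cross-terms, one proportional to $|V_1|$ and one to $|V_2|$.

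The main obstacle — and the step that must be carried out with genuine care — is the exact bookkeeping of these correction terms, i.e. pinning down the precise coefficients $(t_2+1)$ and $(t_1+1)$ rather than merely $t_2$ and $t_1$. This hinges on treating the shared vertex $u_1=u_2$ consistently, since it is counted once in $\mathcal{S}$ yet appears in both $V_1$ and $V_2$, and on correctly tallying how the equidistant edges in $S_i$ and the non-equidistant edges in $T_i$ each feed into the $|V_1|$- and $|V_2|$-terms. Before committing to the closed form I would check the coefficient on a minimal example, such as the path obtained by splicing two copies of $K_2$, to confirm that the count of edges receiving the extra contribution is the intended one.
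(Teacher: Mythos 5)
The gap you flag as ``the main obstacle'' is real, and it cannot be closed: the coefficients $(t_1+1)$ and $(t_2+1)$ are not obtainable, because the statement as printed is false for splice graphs. Even taking Proposition~\ref{prop1}(i) at face value, your decomposition over $E(\mathcal{S})=E_1\cup E_2$ yields exactly the extra terms $t_1|V_2|+t_2|V_1|$ and nothing more; there is no source for the two additional summands $|V_1|+|V_2|$, since a splice, unlike a bridge graph, has no edge joining $u_1$ to $u_2$. Moreover, Proposition~\ref{prop1}(i) is itself off by one in the splice setting: for $f=xy\in T_1$ with $x$ the endpoint nearer to $u_1$, the vertices of $G_2$ that become closer to $x$ in $\mathcal{S}$ are those of $V_2\setminus\{u_2\}$, the identified vertex being already counted inside $n_x^1(f)$, so that $n_x(f)=n_x^1(f)+|V_2|-1$. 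Carrying your plan through with this correction gives
\[
PI_v(\mathcal{S}) \;=\; PI_v(G_1)+PI_v(G_2)+t_1\big(|V_2|-1\big)+t_2\big(|V_1|-1\big),
\]
and your own proposed sanity check settles the matter: the splice of two copies of $K_2$ is the path $P_3$, for which $PI_v(P_3)=3+3=6$, whereas the right-hand side of Proposition~\ref{prop4} evaluates to $2+2+2\cdot 2+2\cdot 2=12$.

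For comparison, the paper's own proof manufactures the missing $|V_1|+|V_2|$ by inserting a summand $[n_{u_1}(e)+n_{u_2}(e)]=|V_1|+|V_2|$ for an edge $e=u_1u_2$ that does not exist in $\mathcal{S}$ --- a relic of the bridge-graph argument of \cite{bridge-Mogh-Gutman}, where such an edge does exist --- and it also applies Proposition~\ref{prop1}(i) without the $-1$ correction. The same off-by-one propagates into Proposition~\ref{prop2}; by contrast, Propositions~\ref{prop3} and~\ref{prop5} survive, because the splice retains every edge of $E_2$, so the edge-count relation $m_x(f)=m_x^1(f)+|E_2|$ is exact. In short, your approach is the correct one, and what it actually proves is the corrected formula displayed above, not the stated one; your instinct to test the coefficients on the $K_2$--$K_2$ splice was precisely the right move and should be promoted from a ``check'' to part of the argument.
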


\begin{proof}
\begin{eqnarray}
PI_v(\mathcal{S}) & = & \sum_{f=xy \in T_1} [n_x(f) + n_y(f)] + \sum_{f=xy \in S_1} [n_x(f) + n_y(f)]
\nonumber \\[3mm]
& + & \sum_{f=xy \in T_2} [n_x(f) + n_y(f)] + \sum_{f=xy \in S_2} [n_x(f) + n_y(f)]
+ [n_{u_1}(e) + n_{u_2}(e)] \nonumber \\[3mm]
& = & \sum_{f=xy \in T_1} [n_x^1(f) + |V_2| + n_y^1(f)] +
\sum_{f=xy \in S_1} [n_x^1(f) + n_y^1(f)] \nonumber \\[3mm]
& + & \sum_{f=xy \in T_2} [n_x^2(f) + |V_1| + n_y^2(f)] +
\sum_{f=xy \in S_2} [n_x^2(f) + n_y^2(f)] + (|V_1|+|V_2|) \nonumber \\[3mm]
& = & \sum_{f=xy \in T_1} [n_x^1(f) + n_y^1(f)] + t_1\,|V_2| +
\sum_{f=xy \in S_1} [n_x^1(f) + n_y^1(f)] \nonumber \\[3mm]
& + & \sum_{f=xy \in T_2} [n_x^2(f) + n_y^2(f)] + t_2\,|V_1| +
\sum_{f=xy \in S_2} [n_x^2(f) + n_y^2(f)] \nonumber \\[3mm]
& + & (|V_1|+|V_2|) \ .      \label{p4}
\end{eqnarray}
Because for $i=1,2$,
$$
\sum_{f=xy \in T_i} [n_x^i(f) + n_y^i(f)] + \sum_{f=xy \in S_i} [n_x^i(f) + n_y^i(f)]
= \sum_{f=xy \in E_i} [n_x^i(f) + n_y^i(f)] = PI_v(G_i)
$$
from Eq. (\ref{p4}) we directly obtain Proposition \ref{prop4}. \end{proof}

\vspace{3mm}

\begin{proposition} \label{prop5}
$$
PI(\mathcal{S}) = PI(G_1)+PI(G_2) + t_2|E_1| + t_1|E_2|
\ .
$$
\end{proposition}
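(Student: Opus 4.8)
The plan is to mirror exactly the structure of the proof of Proposition~\ref{prop4}, replacing the vertex counts $n$ by the edge counts $m$ throughout. First I would split the edge set of $\mathcal{S}$ into the four groups $T_1,S_1,T_2,S_2$ and write
\begin{eqnarray*}
PI(\mathcal{S}) & = & \sum_{f=xy \in T_1} [m_x(f) + m_y(f)] + \sum_{f=xy \in S_1} [m_x(f) + m_y(f)] \\
& + & \sum_{f=xy \in T_2} [m_x(f) + m_y(f)] + \sum_{f=xy \in S_2} [m_x(f) + m_y(f)] \ .
\end{eqnarray*}

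Next I would apply Proposition~\ref{prop1} to each sum. For an edge $f=xy \in T_1$ the identified vertex $u_1$ is strictly closer to one endpoint, say $x$; part~(\emph{i}) then gives $m_x(f)=m_x^1(f)+|E_2|$ and $m_y(f)=m_y^1(f)$, so the bracket equals $m_x^1(f)+m_y^1(f)+|E_2|$. Summing over the $t_1$ edges of $T_1$ contributes an extra $t_1|E_2|$ beyond the intrinsic $G_1$ contribution. For $f=xy \in S_1$ part~(\emph{ii}) gives $m_x(f)=m_x^1(f)$ and $m_y(f)=m_y^1(f)$, so no extra term appears. The symmetric statement of Proposition~\ref{prop1} for edges of $E_2$ handles $T_2$ and $S_2$, producing the term $t_2|E_1|$. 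Recombining, and using that for $i=1,2$
$$
\sum_{f=xy \in T_i} [m_x^i(f) + m_y^i(f)] + \sum_{f=xy \in S_i} [m_x^i(f) + m_y^i(f)] = \sum_{f=xy \in E_i} [m_x^i(f) + m_y^i(f)] = PI(G_i),
$$
I would collect the pieces into $PI(G_1)+PI(G_2)+t_2|E_1|+t_1|E_2|$, which is the claimed identity.

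The one genuine point of care — the place where this differs from Proposition~\ref{prop4} rather than being a verbatim copy — is the treatment of edges incident to the identified vertex. In the vertex-$PI$ computation the two halves of the splice share the single vertex $u_1=u_2$, and the proof of Proposition~\ref{prop4} had to add back the stray term $|V_1|+|V_2|$ coming from counting that shared vertex in both $G_1$ and $G_2$. In the edge version there is no analogous double-counting: each edge of $\mathcal{S}$ belongs to exactly one of $E_1$ or $E_2$ (the splice identifies a \emph{vertex}, not an edge, so no edge is shared), hence the edge sets partition cleanly and no correction term survives. I expect the main obstacle to be verifying carefully that Proposition~\ref{prop1} applies to every edge of $\mathcal{S}$ incident to $u_1$ — in particular confirming that such edges are correctly classified as lying in $E_1$ (resp.\ $E_2$) and that the counts $m_x^i,m_y^i$ are computed within the correct factor $G_i$ — so that the clean partition $E(\mathcal{S})=E_1\sqcup E_2$ is used consistently and the $|V_1|+|V_2|$ artefact of the vertex case does not reappear.
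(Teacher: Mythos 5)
Your proof is correct and is essentially the paper's own argument: partition $E(\mathcal{S})=T_1\sqcup S_1\sqcup T_2\sqcup S_2$, apply Proposition~\ref{prop1} to each class, and recombine into $PI(G_1)+PI(G_2)+t_1|E_2|+t_2|E_1|$.

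The one divergence is precisely the point you flagged as delicate. The paper's proof (modelled on the bridge-graph computation of \cite{bridge-Mogh-Gutman}) opens with an extra summand $[m_{u_1}(e)+m_{u_2}(e)]$, where $e$ denotes the bridging edge $u_1u_2$; in a splice no such edge exists, and this phantom term silently disappears from the second line of the paper's derivation onward, so the paper reaches the same final formula. Your version, which never introduces the term and instead justifies the clean partition $E(\mathcal{S})=E_1\sqcup E_2$, is the tighter argument. One correction to your commentary, though: the stray $|V_1|+|V_2|$ in Proposition~\ref{prop4} is \emph{not} a correction for double-counting the identified vertex; it is the contribution $n_{u_1}(e)+n_{u_2}(e)$ of this same phantom edge, carried into the final statement of that proposition (and hence of doubtful validity for a splice --- as is the vertex half of Proposition~\ref{prop1} itself, since for $f\in T_1$ one actually has $n_x(f)=n_x^1(f)+|V_2|-1$, the vertex $u_2$ being identified with $u_1$ and already counted in $n_x^1(f)$). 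No such off-by-one issue affects the edge counts $m_x(f)$, because no edge is shared between $E_1$ and $E_2$; this is exactly why your proof of Proposition~\ref{prop5} is sound as written, while the analogous vertex statements deserve the skepticism you hinted at.
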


\begin{proof}

\begin{eqnarray}
PI(\mathcal{S}) & = & \sum_{f=xy \in T_1} [m_x(f) + m_y(f)] + \sum_{f=xy \in S_1} [m_x(f) + m_y(f)]
\nonumber \\[3mm]
& + & \sum_{f=xy \in T_2} [m_x(f) + m_y(f)] + \sum_{f=xy \in S_2} [m_x(f) + m_y(f)]
+ [m_{u_1}(e) + m_{u_2}(e)] \nonumber \\[3mm]
& = & \sum_{f=xy \in T_1} [m_x^1(f) + |E_2|+ m_y^1(f)] +
\sum_{f=xy \in S_1} [m_x^1(f) + m_y^1(f)] \nonumber \\[3mm]
& + & \sum_{f=xy \in T_2} [m_x^2(f) + |E_1|+ m_y^2(f)] +
\sum_{f=xy \in S_2} [m_x^2(f) + m_y^2(f)] \nonumber \\[3mm]
& = & \sum_{f=xy \in T_1} [m_x^1(f) + m_y^1(f)] + t_1|E_2| +
\sum_{f=xy \in S_1} [m_x^1(f) + m_y^1(f)] \nonumber \\[3mm]
& + & \sum_{f=xy \in T_2} [m_x^2(f) + m_y^2(f)] + t_2|E_1| +
\sum_{f=xy \in S_2} [m_x^2(f) + m_y^2(f)] \nonumber
\ .      \label{p5}
\end{eqnarray}
Because for $i=1,2$,
$$
\sum_{f=xy \in T_i} [m_x^i(f) + m_y^i(f)] + \sum_{f=xy \in S_i} [m_x^i(f) + m_y^i(f)]
= \sum_{f=xy \in E_i} [m_x^i(f) + m_y^i(f)] = PI(G_i)
$$
from Eq. (\ref{p5}) we directly obtain Proposition \ref{prop5}. \end{proof}

\begin{proposition} \label{prop-Ecc}
\begin{eqnarray*}
Ecc(\mathcal{S}) & = & \sum_{x \in V_1}  \deg_\mathcal{S} (x) \cdot \max \Big\{
d_{G_1}(x,u_1)+\varepsilon_2(u_2),\,\varepsilon_1(x) \Big\}
\\[3mm]
& + & \sum_{y \in V_2}  \deg_\mathcal{S} (y) \cdot \max \Big\{
d_{G_2}(y,u_2)+\varepsilon_1(u_1)\,,\,\varepsilon_2(y) \Big\} \
.
\end{eqnarray*}
\end{proposition}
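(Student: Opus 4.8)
The plan is to reduce everything to the basic distance structure of a splice, namely that the identified vertex (the common image of $u_1$ and $u_2$, which I denote $u$) is a cut vertex of $\mathcal{S}$. First I would record the two elementary distance facts that follow from this. Since deleting $u$ disconnects the $G_1$-part from the $G_2$-part, distances inside each factor are unchanged, so $d_\mathcal{S}(a,b) = d_{G_1}(a,b)$ for $a,b \in V_1$ and $d_\mathcal{S}(a,b) = d_{G_2}(a,b)$ for $a,b \in V_2$; moreover every path between a vertex of $G_1$ and a vertex of $G_2$ must traverse $u$, whence $d_\mathcal{S}(x,z) = d_{G_1}(x,u_1) + d_{G_2}(u_2,z)$ for $x \in V_1$ and $z \in V_2$.

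The heart of the argument is then a pointwise eccentricity formula. Fixing $x \in V_1$, I would split the farthest-point search according to whether the witness lies in $V_1$ or in $V_2 \setminus \{u_2\}$. Over $V_1$ the maximum distance is $\varepsilon_1(x)$ by the first distance fact, while over $V_2$ it is $d_{G_1}(x,u_1) + \max_{z \in V_2} d_{G_2}(u_2,z) = d_{G_1}(x,u_1) + \varepsilon_2(u_2)$ by the second. Taking the larger of the two gives $\varepsilon_\mathcal{S}(x) = \max\{\varepsilon_1(x),\, d_{G_1}(x,u_1) + \varepsilon_2(u_2)\}$, and the mirror-image computation yields $\varepsilon_\mathcal{S}(y) = \max\{\varepsilon_2(y),\, d_{G_2}(y,u_2) + \varepsilon_1(u_1)\}$ for $y \in V_2$.

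Next I would pin down the degrees. Because identifying $u_1$ with $u_2$ leaves the neighbourhood of every other vertex untouched, $\deg_\mathcal{S}(x) = \deg_{G_1}(x)$ for $x \in V_1 \setminus \{u_1\}$ and likewise in $G_2$, while the cut vertex satisfies $\deg_\mathcal{S}(u) = \deg_{G_1}(u_1) + \deg_{G_2}(u_2)$. Substituting the eccentricity formula into $Ecc(\mathcal{S}) = \sum_{v \in V(\mathcal{S})} \deg_\mathcal{S}(v)\,\varepsilon_\mathcal{S}(v)$ and grouping the vertices according to the factor they came from then produces the two displayed sums.

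The one point that needs genuine care — and the step I expect to be the main obstacle in writing this up cleanly — is the bookkeeping at the shared vertex $u$, which is indexed once in $V_1$ and once in $V_2$. Setting $x = u_1$ or $y = u_2$ and using $d_{G_i}(u_i,u_i)=0$, both candidate summands carry the identical eccentricity value $\max\{\varepsilon_1(u_1),\varepsilon_2(u_2)\}$; so to avoid counting $u$ twice one must read the degree weight in each sum as the degree contributed by that factor, the $V_1$-sum supplying $\deg_{G_1}(u_1)$ and the $V_2$-sum supplying $\deg_{G_2}(u_2)$. Since these add to exactly $\deg_\mathcal{S}(u)$, the cut vertex contributes $\deg_\mathcal{S}(u)\,\varepsilon_\mathcal{S}(u)$ precisely once, and the two grouped sums then coincide with the claimed expression.
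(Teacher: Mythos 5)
Your argument follows the same route as the paper's own proof: use the cut-vertex structure of the splice to obtain the pointwise formula $\varepsilon(x)=\max\{\varepsilon_1(x),\,d_{G_1}(x,u_1)+\varepsilon_2(u_2)\}$ for $x\in V_1$ (and its mirror image for $y\in V_2$), then substitute into the definition $Ecc(\mathcal{S})=\sum_{v}\deg_\mathcal{S}(v)\,\varepsilon(v)$. On that main line the two proofs coincide.

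However, the point you flag as ``bookkeeping'' is more than a writing obstacle: it is a genuine defect of the stated formula, which the paper's proof passes over in silence (it ends with ``Proposition \ref{prop-Ecc} follows now from the definition''). If $\deg_\mathcal{S}$ is read literally in both sums, the identified vertex $u$ appears once as $u_1$ in the first sum and once as $u_2$ in the second, and each of those two summands equals $\deg_\mathcal{S}(u)\cdot\max\{\varepsilon_1(u_1),\varepsilon_2(u_2)\}=\deg_\mathcal{S}(u)\,\varepsilon(u)$. Hence the displayed right-hand side equals $Ecc(\mathcal{S})+\deg_\mathcal{S}(u)\,\varepsilon(u)$ rather than $Ecc(\mathcal{S})$. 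Your repair --- weighting the $V_1$-sum by $\deg_{G_1}(u_1)$ and the $V_2$-sum by $\deg_{G_2}(u_2)$ at the shared vertex, so that the two contributions add to $\deg_\mathcal{S}(u)$ exactly once --- is correct, since $\deg_\mathcal{S}(u)=\deg_{G_1}(u_1)+\deg_{G_2}(u_2)$ and all other vertices keep their factor degrees in $\mathcal{S}$. (Equivalently, one could keep $\deg_\mathcal{S}$ throughout and subtract the surplus term $\deg_\mathcal{S}(u)\,\varepsilon(u)$ once.) So your proposal is not only a valid proof of the corrected statement; it also identifies and fixes an overcounting that the paper's version of both the proposition and its proof overlooks.
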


\begin{proof}
Consider a vertex $x$ of the splice graph $\mathcal{S}$,
such that $x \in V_1$. Let $z$ be the vertex of $\mathcal{S}$ whose distance to $x$
is maximal. Thus, $\varepsilon(x)=d_{\mathcal{S}}(x,z)$. If $z$ belongs to the
graph $G_1$\,, then $\varepsilon(x)=\varepsilon_1(x)$. If $z$ belongs to
the graph $G_2$\,, then the distance between $x$ and $z$ is equal to
$d_{G_1}(x,u_1)+d_{G_2}(u_2,z)$, see Fig. 1. In addition, $z$ must be the
vertex at greatest distance from $u_2$\,. Consequently,
$\varepsilon(x)=d_{G_1}(x,u_1)+\varepsilon_2(u_2)$.
This means that no matter where the vertex $z$ is located,
$$
\varepsilon(x)=\max \Big\{ d_{G_1}(x,u_1)+\varepsilon_2(u_2)\,,\,\varepsilon_1(x) \Big\}
$$
holds for any vertex $x \in V_1$. The formula for $\varepsilon(y)$ in the case
when $y \in V_2$ is fully analogous.

Proposition \ref{prop-Ecc} follows now from the definition of
the eccentric connectivity index, Eq. (\ref{ec}). \end{proof}

\vspace{9mm}

\baselineskip=0.25in

\end{document}